\newtheorem{thm}{Theorem}[section]
\newtheorem{lem}[thm]{Lemma}
\newtheorem{cor}[thm]{Corollary}
\theoremstyle{definition}
\newtheorem{defn}[thm]{Definition}
\theoremstyle{remark}
\newtheorem{rem}[thm]{Remark}
\theoremstyle{fact}
\numberwithin{equation}{section}
\begin{document}
\begin{center}
{\Large \textbf{\\Standard Projective Simplicial Kernels and the Second Abelian Cohomology of Topological Groups }}\\[5mm]
{\large \textbf{H. Sahleh\footnote{\emph{E-mail: sahleh@guilan.ac.ir $^\mathrm{2}$E-mail: h.e.koshkoshi@guilan.ac.ir}}  and H.E. Koshkoshi$^\mathrm{2}$}}\\[1mm]
$^\mathrm{1}${\footnotesize \it Department of  Mathematics, Faculty of Mathematical Sciences, University of Guilan\\ P. O. Box 1914, Rasht, Iran}\\[1mm]
$^\mathrm{2}${\footnotesize \it Department of  Mathematics, Faculty of Mathematical Sciences, University of Guilan}
\end{center}
\date{}
%\maketitle
\newcommand{\stk}[1]{\stackrel{#1}{\longrightarrow}}
\begin{abstract}
Let $A$ be an abelian topological $G$-module. We give an interpretion for the second cohomology, $H^{2}(G,A)$, of $G$ with coefficients in $A$. As a result we show that if $P$ is a projective topological group, then $H^{2}(P,A)=0$ for every abelian topological $P$-module $A$.
\end{abstract}
\par \textbf{MSC 2010:} Primary 22A05, 20J06, Secondary 18G50.
\\
\par \textbf{Keywords:}  Abelian cohomology of topological groups; Markov (Graev) free topological Group;  projective topological group; standard projective simplicial kernel.
\section{Introduction}
Let $G$  be a topological group and $A$ an abelian topological group. It is said that $A$ is an abelian topological $G$-module, whenever $G$   acts continuously on $A$. For all $g\in G$ and $a\in A$ we denote the action of $g$ on $a$ by $^{g}a$. Suppose that $A$ is an abelian topological $G$-module. Hu \cite{Hu} showed that if $G$ is Hausdorff, then there is a one to one correspondence between the second cohomology,  $H^{2}(G,A)$, of $G$ with coefficients in $A$ and the set, $Ext_{s}(G,A)$, of all equivalence classes of topological extensions with continuous sections. Thus, $H^{2}(G,A)$ induces  a group structure on $Ext_{s}(G,A)$ and consequently, under this group product, $H^{2}(G,A)$ is isomorphic to  $Ext_{s}(G,A)$. It is known that $Ext_{s}(G,A)$ with Baer sum is an abelian group and the Baer sum on $Ext_{s}(G,A)$ coincides with the group product induced by $H^{2}(G,A)$. In other words, if $G$ is Hausdorff then, $H^{2}(G,A)$ is isomorphic to $Ext_{s}(G,A)$ with Baer sum \cite{Al}. Similarly, we define $Opext_{s}(G,A)$ and we conclude the same result without Hausdorffness of $G$, i.e., there is an isomorphism between $H^{2}(G,A)$ and $Opext_{s}(G,A)$. By using the concept of projective topological group we introduce the notion of a standard simplicial kernel of a topological group $G$.
\par in section \ref{section 2}, we prove that every Markov (Graev) topological group is projective and also we show that projectivity in the category of topological groups is equivalent to $\mathfrak{F}$-projectivity in the sense of Hall \cite{Hal}. In addition, we introduce the notion of a standard simplcial kernel of a topological group $G$. Finally, we define  $Opext_{s}(G,A)$ and  we conclude that $H^{2}(G,A)\cong Opext_{s}(G,A)$, with Baer sum.
\par In section \ref{section 3}, we give a characterization of $H^{2}(G,A)$, when $A$ is an abelian topological $G$-module. As a result, we show that if $P$ is a projective topological group then $H^{2}(P,A)=0$, for every abelain topological $P$-module. 
\section{ Standard Projective Simplicial Kernels.}\label{section 2}
In this section, we introduce the notion of a standard projective simplicial kernel of a topological group $G$.
\begin{defn}\label{Definition 1.1.} A topological group $P$ is said to be a projective topological group if, for every continuous epimorphism $\pi:A\rightarrow B$, where $\pi$ has a continuous section, and for every continuous homomorphism $f:P\rightarrow B$, there exists a continuous homomorphism $g:P\rightarrow A$ such that the following diagram is commutative
$$\xymatrix{ & P\ar[d]^{f} \ar[dl]_{g} & \\ A\ar[r]^{\pi}&B\ar[r]&1.}$$
\end{defn}
If $X$ is a topological space, then the (Markov) free topological group over $X$ is the pair
$(F_{X},\sigma_{X}:X\to F_{X})$ in which $F_{X}$ is a group equipped with the finest group topology such that $\sigma_{X}$ is continuous. Such a topology always exists \cite{Tho}, and has the following universal
property: every continuous map $f$ from $X$ to an
arbitrary topological group $G$ lifts to a unique continuous homomorphism $\bar f$, .i.e., $\bar f \circ\sigma_{X}=f$. Similarly,  one can define the Graev free topological group, $(F^{*}_{X},\sigma_{X}^{*})$, over a pointed topological space $(X,e)$. For information on free (abelian) topological groups see \cite{Gra, Mar, Tho}.
The following facts about Markov (Graev) free toplogiacal group are well-known:
\emph{\begin{itemize}
  \item[(1)] The Markov (Graev) free topological group over a (pointed) topological
space $X$ ($(X,e)$) is the free group with the same canonical map over the (pointed) set
$X$ ($(X,e)$);
  \item[(2)] $F_{X}$ ($F^{*}_{X}$) is Hausdorff if and only if $X$ is functionally Hausdorff;
  \item[(3)] $\sigma_{X}$ ($\sigma_{X}^{*}$) is a homeomorphic embedding if and only if $X$ is  completely regular;
  \item[(4)] $\sigma_{X}$  ($\sigma_{X}^{*}$) is a closed homeomorphic embedding if and only if $X$ is Tychonov.
\end{itemize}}
The elements of  free group, $F_{X}$, over a set $X$ is denoted by $|x_{1}|^{\epsilon_{1}}...|x_{n}|^{\epsilon_{n}}$, where $\epsilon_{i}=\pm 1$. This notation is useful whenever $X$ is a group.
\begin{lem}\label{Lemma 1.2.} Every Markov (Graev) free topological group is a projective topological group.
\end{lem}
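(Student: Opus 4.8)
The plan is to exploit the defining universal property of the free topological group, applied not to $f$ itself but to the composite of $f$ with a continuous section of $\pi$. Write $P=F_X$ (the Graev case $F^{*}_{X}$ is handled identically after a basepoint adjustment discussed below), and let $s\colon B\to A$ be a continuous section of the given epimorphism, so that $\pi\circ s=\mathrm{id}_{B}$ while $s$ is merely continuous, not a homomorphism. The key observation is that $\sigma_{X}(X)$ generates $P$ as a group and that the universal property lifts \emph{arbitrary continuous maps} out of $X$, not only homomorphisms; this is precisely what allows the (possibly non-multiplicative) section to be used.

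First I would form the continuous map $s\circ f\circ\sigma_{X}\colon X\to A$, which is continuous as a composite of continuous maps. By the universal property of $F_{X}$ it extends to a unique continuous homomorphism $g\colon P\to A$ satisfying $g\circ\sigma_{X}=s\circ f\circ\sigma_{X}$. This $g$ is the candidate lift appearing in the diagram of Definition \ref{Definition 1.1.}.

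It then remains to verify the commutativity $\pi\circ g=f$. Both sides are continuous homomorphisms $P\to B$, so by the uniqueness clause of the universal property it suffices to check that they agree after precomposition with $\sigma_{X}$. Indeed $(\pi\circ g)\circ\sigma_{X}=\pi\circ(s\circ f\circ\sigma_{X})=(\pi\circ s)\circ(f\circ\sigma_{X})=f\circ\sigma_{X}$, using $\pi\circ s=\mathrm{id}_{B}$. Hence $\pi\circ g$ and $f$ are two continuous homomorphisms restricting to the same map $f\circ\sigma_{X}$ on the generators, and uniqueness forces $\pi\circ g=f$.

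I do not expect a genuine obstacle, only one point requiring care, namely the Graev case, where the universal property concerns pointed continuous maps $(X,e)\to(A,1)$, so I must ensure that $s\circ f\circ\sigma^{*}_{X}$ sends $e$ to $1_{A}$. Since $f$ is a homomorphism and $\sigma^{*}_{X}(e)=1$, this reduces to arranging $s(1_{B})=1_{A}$, and any continuous section $t$ can be normalized by setting $s(b)=t(b)\,t(1_{B})^{-1}$, which remains continuous and still satisfies $\pi\circ s=\mathrm{id}_{B}$ because $t(1_{B})\in\ker\pi$. With this normalization the Graev argument is word-for-word the same. The conceptual heart of the proof is simply that projectivity is tested only against epimorphisms admitting continuous sections, and such a section supplies exactly the continuous data on the free generators that the universal property converts into the desired homomorphism $g$.
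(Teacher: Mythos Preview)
Your proof is correct and follows essentially the same approach as the paper: both lift the composite $s\circ f\circ\sigma_X$ via the universal property and verify $\pi\circ g=f$ on generators, with the identical normalization $s(b)=t(b)t(1_B)^{-1}$ for the Graev case. Your use of the uniqueness clause to conclude $\pi\circ g=f$ is a minor stylistic variant of the paper's appeal to $X$ generating $F$, but the argument is the same.
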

\begin{proof} Assume that $F$ is a Markov free topological group over the space $X$, and let $\pi:A\rightarrow B$ be a continuous epimorphism having a continuous section $s$ and $f:F\rightarrow B$ a continuous homomorphism. Then, there is a unique continuous homomorphism $\bar{f}:F\rightarrow A$ such that the following diagram is commutative
$$\xymatrix{ X\ar[r]^{\sigma_{X}}\ar[d]^{\sigma_{X}} & F\ar[r]^{f} & B\ar[d]^{s} \\
F\ar[rr]^{\bar f} & & A}$$
Hence, $sf=\bar{f}$ on $X$. So, $\pi\bar{f}=\pi(sf)=(\pi s)f=f$ on $X$. Since $X$ generates the group $F$, then it is easy to see that $\pi\bar{f}=f$. Consequently, $F$ is a projective topological group.
\par Now, Let $X$ be a topological space with a fixed point $e\in X$. Assume that $F^{*}$ is the free topological group in the sense of Graev over $(X,e)$, and let $\pi:A\rightarrow B$ be a continuous epimorphism with continuous section $s:B\rightarrow A$. We can assume that $s$ is a normal section, i.e., $s(1)=1$, since it is enough to define the new continuous section $\tilde{s}:B\rightarrow A$ by $\tilde{s}(b)=s(b)s(1)^{-1}$.  In fact, since $e$ is the neutral element of $F^{*}$, then $f(e)=1$. Thus, $sf(e)=1$. Hence there exists a unique continuous homomorphism $\tilde{f}:F^{*}\rightarrow A$ such that $sf=\tilde{f}$ on $X$. The rest of the proof is the same as the first part.
\end{proof}
Note that the free functor $F_{(-)}$ from the category, $\mathcal{T}$, of topological spaces to the category, $\mathcal{TG}$, of topological groups is left adjoint to the forgetful functor, $G:\mathcal{TG} \to \mathcal{T}$. Let $\mathfrak{F}$ be the class of all continuous epimorphisms having a continuous section. Thus, by \cite[Theorem 2]{Hal} one can see the following:
\begin{rem}\label{Remark 1.3.} A topological group $P$ is $\mathfrak{F}$-projective  if and only if $P$ is projective.
\end{rem}
\begin{defn}\label{Definition 1.4.} Let $M$, $F$ and $G$ be topological groups, and let $\iota_{0}, \iota_{1}:M\rightarrow F$ and $\tau:F\rightarrow G$ be continuous homomorphisms. It is said that $(M,\iota_{0},\iota_{1})$ is a simplicial kernel of $\tau$, whenever $\tau\iota_{0}=\tau\iota_{1}$
and it has the following universal property:
 \par if $\jmath_{0}, \jmath_{1}:K \rightarrow F$ are continuous homomorphisms and  $\pi \jmath_{0}=\pi \jmath_{1}$, then there exists a unique continuous homomorphism  $h:K\rightarrow M$ such that $\jmath_{0}=\iota_{0}h$ and $\jmath_{1}=\iota_{1}h$, i.e., $h$ commutes the  following diagram.
\begin{center}
$\xymatrix{ K \ar @{->} @< 3pt> [r]^{\jmath_{0}}
\ar @{ ->} @<-3pt> [r]_{\jmath_{1}} \ar [d]_{h}& F\ar @{=}[d]\ar [r]^{\tau}& G \\M \ar @{->} @< 3pt> [r]^{\iota_{0}}
\ar @{ ->} @<-3pt> [r]_{\iota_{1}} & F\ar [r]^{\tau}& G }$
\end{center}
\end{defn}
\begin{defn}\label{Definition 1.5.}  Let $(M,\iota_{0},\iota_{1})$ be a simplicial kernel of $\tau:P\rightarrow G$. The quadruplet $(M,\iota_{0},\iota_{1},\tau)$ is said to be  a \emph{projective simplicial kernel of $G$}, if $P$ is a projective topological group and $\tau$ is a continuous (open) epimorphism  which has a continuous section $s:G\rightarrow P$. If $P$ is a Markov (Graev) free topological group, then $(M,\iota_{0},\iota_{1},\tau)$ is called a \emph{Markov (Graev) simplicial kernel of $G$}.
\end{defn}
\begin{lem}\label{Lemma 1.6.} If a continuous homomorphism $f:G\to H$ has a continuous section, then $f$ is an open epimorphism.
\end{lem}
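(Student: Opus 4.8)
The plan is to check the two claims separately. The epimorphism part is immediate: writing $s\colon H\to G$ for the continuous section, so that $f\circ s=\mathrm{id}_H$, every $h\in H$ can be written as $h=f(s(h))$, whence $f$ is surjective and therefore an epimorphism. The real content of the lemma is the openness of $f$, and here I would exploit the section together with the kernel $N=\ker f$.

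For openness I would establish the set-theoretic identity $f(U)=s^{-1}(UN)$ for every open $U\subseteq G$. The point is that the fibre of $f$ over a point $h\in H$ is exactly the coset $s(h)N$ (because $f(s(h))=h$ and $N=\ker f$), so $h$ lies in $f(U)$ precisely when $U$ meets $s(h)N$, that is, when $s(h)\in UN$. Granting this identity, the conclusion is formal: $UN=\bigcup_{n\in N}Un$ is a union of right translates of the open set $U$ and hence open, and since $s$ is continuous, $f(U)=s^{-1}(UN)$ is open, so $f$ carries open sets to open sets.

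The one step that needs care, and which I regard as the crux, is the identity $f(U)=s^{-1}(UN)$; I expect it to reduce to two routine coset computations. For the inclusion $\subseteq$, if $h=f(u)$ with $u\in U$ then $u^{-1}s(h)\in N$, so $s(h)\in UN$; for $\supseteq$, if $s(h)=un$ with $u\in U$ and $n\in N$ then $h=f(s(h))=f(u)\in f(U)$. Should one prefer to bypass the kernel, an alternative is to normalise $s$ so that $s(e_H)=e_G$ and then show directly that $f$ maps each neighbourhood $V$ of $e_G$ onto a neighbourhood of $e_H$, using the containment $s^{-1}(V)\subseteq f(V)$; openness of the homomorphism $f$ then follows by translating the argument to an arbitrary point.
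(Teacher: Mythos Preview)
Your argument is correct: the surjectivity is immediate from $f\circ s=\mathrm{id}_H$, and the identity $f(U)=s^{-1}(UN)$ together with the openness of $UN$ and the continuity of $s$ yields openness of $f$. The paper itself gives no proof of this lemma beyond the remark that it is ``a standard argument,'' so there is nothing to compare against; your write-up is a clean instantiation of exactly the kind of argument the authors are deferring to.
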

\begin{proof}
 The proof is a standard argument. 
 \end{proof}
For any topological group $G$ there exists at least one projective simplicial kernel of $G$. Since the identity map $Id_{G}:G\rightarrow G$ lifts to a unique homomorphism $\tau_{G}:F_{G}\to G$. Thus, $\tau_{G}\sigma_{G}=Id_{G}$.  The uniqueness property implies that $\tau_{G}$ is a map by the following rule:
 \begin{center}
 $\tau_{G}:F_{G}\rightarrow G$, $|g_{1}|^{\epsilon_{1}}...|g_{n}|^{\epsilon_{n}}\mapsto g_{1}^{\epsilon_{1}}...g_{n}^{\epsilon_{n}}$ where $\epsilon_{i}=\pm1$.
\end{center}
The map $\tau_{G}$ is called \emph{multiplication map}. Obviously, $\sigma_{G}:G\rightarrow F_{G}$, $g\mapsto |g|$, is a continuous section for $\tau_{G}$.  By Lemma \ref{Lemma 1.6.}, $\tau_{G}$ is an open continuous epimorphism.
 We take $M_{G}=\{(x,y)|x,y\in F_{G}, \tau_{G}(x)=\tau_{G}(y)\}\subset F_{G}\times F_{G}$. Hence, $M_{G}$ has subspace topology induced by the product topology $F_{G}\times F_{G}$. Suppose $\iota_{0}=\iota_{0}^{G},\iota_{1}=\imath_{1}^{G}:M_{G}\rightarrow F_{G}$ are the canonical projection maps, where $\iota_{0}^{G}(x,y)=x$ and $\iota_{1}^{G}(x,y)=y$. It is easy to see that $(M_{G},\iota_{0}^{G},\iota_{1}^{G})$ is a simplicial kernel of $\tau_{G}$. Hence,  $(M_{G},\iota_{0}^{G},\iota_{1}^{G},\tau_{G})$ is a projective simplicial kernel of $G$ and we call it the \emph{standard Markov simplicial kernel of }$G$. By a similar way we can define the \emph{standard Graev (projective) simplicial kernel of }$G$.
\\
\par Suppose that $(M,\iota_{0},\iota_{1},\tau)$ is a  projective simplicial kernel of $G$. We sometimes denoted it by the following $$\xymatrix{M \ar @{->} @< 3pt> [r]^{\iota_{0}}
\ar @{ ->} @<-3pt> [r]_{\iota_{1}} & P\ar [r]^{\tau}& G \ar [r]&1}. \eqno(*)$$
\par If $(M_{G},\iota_{0},\iota_{1},\tau_{G})$ is a standard projective simplicial kernel of $G$, then put $\Delta_{P}=\{(x,x)|x\in P\}\subset M$. Now, let $A$ be an abelian topological $G$-module. Obviously, $A$ is a topological $P$-module via $\tau$ and a topological $M$-module via $\tau\iota_{0}$ (or $\tau\iota_{1}$).
Define $Z^{1}(M,A)=\{\alpha|\alpha\in Der_{c}(M,A), \alpha(\Delta_{P})=0\}$.  It is clear that $Z^{1}(M,A)$ is a subgroup of abelian group $Der_{c}(M,A)$.
\par Consider
\begin{center}
$\eta:Der_{c}(P,A)\rightarrow Z^{1}(M,A)$\\  $\alpha\mapsto \alpha\iota_{0}\alpha\iota_{1}^{-1}$
\end{center}
 Obviously, $\eta$ is well-defined and since $A$ is an abelian group, then
\begin{center}
$\eta(\alpha.\beta)=(\alpha.\beta\iota_{0})(\alpha.\beta\iota_{1}^{-1})=(\alpha\iota_{0}.\beta\iota_{0})(\alpha\iota_{1}^{-1}.\beta\iota_{1}^{-1})$ \\
$=(\alpha\iota_{0}\alpha\iota_{1}^{-1})(\beta\iota_{0}\beta\iota_{1}^{-1})=\eta(\alpha)\eta(\beta)$.
\end{center}
Therefore, $\eta$ is a homomorphism.
 Clearly, $\eta$ induces a congruence equivalence relation $\sim$ in $Z^{1}(M,A)$. In other words, for  $\alpha,\beta\in Z^{1}(M,A)$, the equivalence relation $\sim$  is defined as follows:
 \begin{center}
 $\alpha \sim \beta$ whenever there is $\gamma\in Der_{c}(P,A)$ and $\beta=\gamma\iota_{0}\gamma\iota_{1}^{-1}\alpha$.
 \end{center}
\begin{defn}\label{Definition 1.7.} It is said that a short exact sequence $$e: 1\to A\stackrel{\chi}\to E\stackrel{\pi}\to G\to 1$$ is  a proper extension of $G$ by $A$ whenever $\chi$ is a homeomorphic embedding and $\pi$ is an open continuous homomorphism. By a section for $e$ we mean a continuous map $s:G\to E$ such that $\pi s=Id_{G}$.
\end{defn}
Let $e$ be a  proper extension of $G$ by $A$ which has a section $s:G\to E$. In addition, assume that $A$ is abelian. Then the proper
extension $e$ gives rise to a topological $G$-module structure on $A$, which is well-defined
by
\begin{center}
$^{g}a=\chi^{-1}(s(g)\chi(a)s(g)^{-1})$ , where $g\in G$ , $a\in A$.
\end{center}
 One can easily see that for each $g\in G$ and $a\in A$, the element $^{g}a$ dose not depend on the choice of  continuous sections.
\par Now, let $A$ be an abelian topological $G$-module. We denote by $opext_{s}(G,A)$ the set of all proper extensions of $G$ by $A$ having continuous sections and corresponding to the given way in which $G$ acts on $A$. We define an equivalence relation $\equiv$ on $opext_{s}(G,A)$ as follows:
\par
 Let $e_{i}: 1\to A\stackrel{\chi_{i}}\to E\stackrel{\pi_{i}}\to G\to 1\in opext_{s}(G,A)$ for $i=0,1$.
 $e_{0}\equiv e_{1}$ whenever there is a continuous (open) homomorphism $\theta:E_{0}\to E_{1}$ so that the following diagram is commutative.
 $$\xymatrix{1\ar[r]& A\ar[r]^{\chi_{0}}\ar@{=}[d]& E_{0}\ar[r]^{\pi_{0}}\ar[d]^{\theta}& G\ar[r]\ar@{=}[d]&1
 \\ 1\ar[r]& A\ar[r]^{\chi_{1}}& E_{1}\ar[r]^{\pi_{1}}& G\ar[r]&1}$$
 We denote the set of all equivalence classes by $Opext_{s}(G,A)$. Note that if $G$ is Hausdorff, then $Opext_{s}(G,A)=Ext_{s}(G,A)$. One can verify  by a similar argument as in \cite{Al, Hu} that we have the following.
\begin{thm}\label{Theorem 1.8.}   Let $A$ be an abelian topological $G$-module, then $H^{2}(G,A)$ in the sense of Hu, is  isomorphic to $Opext_{s}(G,A)$ with Baer sum.
\end{thm}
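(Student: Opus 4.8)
The plan is to imitate the classical factor-set correspondence of Hu and \cite{Al}, but to carry every construction through continuously and to build the separation-type hypotheses directly into the word \emph{proper} (Definition~\ref{Definition 1.7.}), so that no Hausdorffness on $G$ is needed. First I would fix the description of $H^{2}(G,A)$ in the sense of Hu as the quotient $Z^{2}_{c}(G,A)/B^{2}_{c}(G,A)$, where $Z^{2}_{c}(G,A)$ consists of the continuous normalized maps $f:G\times G\to A$ satisfying the cocycle identity
\[
{}^{g}f(h,k)-f(gh,k)+f(g,hk)-f(g,h)=0,
\]
and $B^{2}_{c}(G,A)$ consists of the coboundaries $(\delta c)(g,h)={}^{g}c(h)-c(gh)+c(g)$ for continuous $c:G\to A$ with $c(1)=0$. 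Since $A$ is abelian these are subgroups of the abelian group of continuous $2$-cochains under pointwise addition, and the induced group operation on $H^{2}(G,A)$ is addition of cocycles.

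Next I would define the forward map $\Phi$ on $Z^{2}_{c}(G,A)$ by the crossed-product construction: given $f$, let $E_{f}=A\times G$ carry the product topology and the multiplication $(a,g)(b,h)=(a+{}^{g}b+f(g,h),\,gh)$, with $\chi:a\mapsto(a,1)$ and $\pi:(a,g)\mapsto g$. I would verify that, because $f$ and the action $G\times A\to A$ are continuous, multiplication and inversion on $E_{f}$ are continuous, so $E_{f}$ is a topological group; that $\chi$ is a homeomorphic embedding onto $A\times\{1\}$ with the subspace topology; that $s:g\mapsto(0,g)$ is a continuous section; and that by Lemma~\ref{Lemma 1.6.} the existence of this section forces $\pi$ to be an open epimorphism. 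A direct computation of $\chi^{-1}(s(g)\chi(a)s(g)^{-1})$ then confirms that the $G$-module structure induced by $e$ agrees with the given one, so $\Phi(f)\in opext_{s}(G,A)$. Conversely I would send a proper extension $e$ equipped with a continuous section $s$ to its factor set $f(g,h)=\chi^{-1}\bigl(s(g)s(h)s(gh)^{-1}\bigr)$, which is a continuous $2$-cocycle because $s$, multiplication in $E$, and $\chi^{-1}$ (continuous since $\chi$ is a homeomorphic embedding) are all continuous.

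I would then check the two maps descend to the quotients and are mutually inverse. Replacing the section $s$ by another continuous section $s'$ changes the factor set by a continuous coboundary, so the class in $H^{2}(G,A)$ is independent of $s$; and an equivalence $\theta:E_{0}\to E_{1}$ as in the definition of $\equiv$ transports sections and hence produces cohomologous factor sets, so cohomologous cocycles correspond to equivalent extensions. Conversely, equivalent cocycles $f$ and $f+\delta c$ yield an explicit continuous isomorphism $E_{f}\to E_{f+\delta c}$, $(a,g)\mapsto(a+c(g),g)$, which is an open homomorphism and realizes $\equiv$. Finally, since the Baer sum of $\Phi(f_{0})$ and $\Phi(f_{1})$ is represented by the pullback-pushout extension whose factor set is $f_{0}+f_{1}$, the correspondence carries addition of cocycles to the Baer sum, giving the claimed group isomorphism $H^{2}(G,A)\cong Opext_{s}(G,A)$.

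The routine parts are the algebraic cocycle identities and the Baer-sum bookkeeping, which are identical to the abstract case. The main obstacle is the topological one: verifying that $\Phi(f)$ is genuinely \emph{proper} without any separation axiom on $G$, i.e.\ that $\pi$ is open and $\chi$ is a homeomorphic embedding. Here the crucial inputs are the product topology on $A\times G$ (which makes $\chi$ an embedding automatically) and Lemma~\ref{Lemma 1.6.} (which yields openness of $\pi$ from the continuous section), precisely the two ingredients that replace the closedness/Hausdorff arguments of Hu; dually, one must confirm that the factor set extracted from a proper extension is continuous, which is where the homeomorphic-embedding hypothesis on $\chi$ is used to guarantee continuity of $\chi^{-1}$.
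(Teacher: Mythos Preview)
Your proposal is correct and is precisely the argument the paper has in mind: the paper does not give an independent proof of Theorem~\ref{Theorem 1.8.} but merely states that ``one can verify by a similar argument as in \cite{Al, Hu}'' that the result holds, and what you have written is exactly that factor-set correspondence of Hu and Alperin--Sahleh, with the Hausdorff hypothesis on $G$ replaced by the built-in requirements of Definition~\ref{Definition 1.7.} (openness of $\pi$, homeomorphic embedding of $\chi$). Your identification of the two genuinely topological points---Lemma~\ref{Lemma 1.6.} for openness of $\pi$ in the crossed product, and the homeomorphic-embedding hypothesis for continuity of $\chi^{-1}$ when extracting the factor set---is exactly what makes the classical argument go through without separation axioms, so there is nothing to add.
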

\section{the Main Theorem }\label{section 3}
In this section we will prove the  main theorem. Sometimes we denote the inclusion maps and epics by the arrows $\hookrightarrow$ and $\twoheadrightarrow$, respectively. Let $A$ be a topological $G$-module. Thus, under the action $(g,a)\stackrel{\phi}\mapsto \ ^{g}a$ we may consider the topological semidirect product $A\rtimes _{\phi} G$ (see \cite[Section 6]{Sah1}).
\begin{thm}\label{Theorem 2.1.} Let $A$ be an abelian topological $G$-module and let $(M,\iota_{0},\iota_{1},\tau)$ be a standard projective simplicial kernel of $G$. Then,  $H^{2}(G,A)$ is canonically isomorphic to $Z^{1}(M,A)/\sim$.
\end{thm}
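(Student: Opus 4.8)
The plan is to factor the claim through Theorem \ref{Theorem 1.8.}: since that result gives $H^{2}(G,A)\cong Opext_{s}(G,A)$ with Baer sum, it suffices to produce a group isomorphism $\Phi:Opext_{s}(G,A)\to Z^{1}(M,A)/\!\sim$. Write $N=\ker\tau$ and identify $A$ with $\chi(A)\subseteq E$. Given $e:1\to A\xrightarrow{\chi}E\xrightarrow{\pi}G\to 1$ in $Opext_{s}(G,A)$, the map $\pi$ is a continuous epimorphism with a continuous section, so $\pi$ lies in $\mathfrak{F}$; since $P$ is projective (Remark \ref{Remark 1.3.}), the homomorphism $\tau:P\to G$ lifts to a continuous homomorphism $\lambda:P\to E$ with $\pi\lambda=\tau$. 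For $(x,y)\in M$ one has $\pi(\lambda(x)\lambda(y)^{-1})=\tau(x)\tau(y)^{-1}=1$, so I may set $\alpha_{e}(x,y)=\chi^{-1}(\lambda(x)\lambda(y)^{-1})$ and declare $\Phi(e)=[\alpha_{e}]$. A direct computation in $E$, using that conjugation by any lift of $g$ realises the module action ${}^{g}(-)$ and that $A$ is abelian, gives $\alpha_{e}((x,y)(x',y'))=\alpha_{e}(x,y)+{}^{\tau(x)}\alpha_{e}(x',y')$; as the $M$-action is $\tau\iota_{0}$ and $\alpha_{e}$ visibly vanishes on $\Delta_{P}$, this shows $\alpha_{e}\in Z^{1}(M,A)$, continuity being inherited from $\lambda$ and the embedding $\chi$. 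To see $\Phi$ is well defined, I would note that a second lift $\lambda'$ produces $\gamma(x)=\chi^{-1}(\lambda'(x)\lambda(x)^{-1})\in Der_{c}(P,A)$ with $\alpha_{e}'=\eta(\gamma)\alpha_{e}$, so $[\alpha_{e}]$ is lift-independent, while if $e_{0}\equiv e_{1}$ via $\theta$ then $\theta\lambda_{0}$ is a lift for $e_{1}$ giving $\alpha_{e_{1}}=\alpha_{e_{0}}$.

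For surjectivity I would run the construction backwards. Given $\alpha\in Z^{1}(M,A)$, set $\beta=\alpha\circ j$, where $j:N\to M$, $j(n)=(n,1)$. Since $\tau|_{N}$ is trivial, $\beta$ is a continuous homomorphism, and applying the derivation identity to $(x,x)(n,1)(x^{-1},x^{-1})=(xnx^{-1},1)$ together with $\alpha(\Delta_{P})=0$ yields the equivariance $\beta(xnx^{-1})={}^{\tau(x)}\beta(n)$. This lets me form $E=(A\rtimes_{\phi}P)/K$ with $K=\{(-\beta(n),n):n\in N\}$, which equivariance makes a normal subgroup; the maps $a\mapsto[(a,1)]$ and $[(a,x)]\mapsto\tau(x)$ give a short exact sequence $1\to A\to E\to G\to 1$ with continuous section $g\mapsto[(0,s(g))]$ induced by the section $s:G\to P$ of $\tau$. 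Taking the lift $\lambda(x)=[(0,x)]$ and using $(x,y)(y^{-1},y^{-1})=(xy^{-1},1)$, one computes $\alpha_{e}(x,y)=\beta(xy^{-1})=\alpha(xy^{-1},1)=\alpha(x,y)$, so $\Phi(e)=[\alpha]$.

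For injectivity, suppose $\Phi(e_{0})=\Phi(e_{1})$, so $\alpha_{e_{1}}=\eta(\gamma)\alpha_{e_{0}}$ for some $\gamma\in Der_{c}(P,A)$. Replacing $\lambda_{0}$ by $x\mapsto\chi_{0}(\gamma(x))\lambda_{0}(x)$ (again a continuous lift, by the derivation identity) I may assume $\alpha_{e_{0}}=\alpha_{e_{1}}=:\alpha$. Every $u\in E_{0}$ has the form $\chi_{0}(a)\lambda_{0}(x)$, and the rule $\theta(\chi_{0}(a)\lambda_{0}(x))=\chi_{1}(a)\lambda_{1}(x)$ is well defined precisely because both extensions yield the same $\alpha$; it is a homomorphism compatible with the two sequences. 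Choosing the continuous sections $s_{i}=\lambda_{i}\circ s$ identifies each $E_{i}$ homeomorphically with $A\times G$ via $(a,g)\mapsto\chi_{i}(a)s_{i}(g)$, and in these coordinates $\theta$ is the identity, so $\theta$ is a homeomorphism and $e_{0}\equiv e_{1}$. Finally, a standard factor-set computation shows that for the Baer sum a lift may be chosen with $\alpha_{e_{0}+e_{1}}=\alpha_{e_{0}}+\alpha_{e_{1}}$, so $\Phi$ is an isomorphism, and composing with Theorem \ref{Theorem 1.8.} gives $H^{2}(G,A)\cong Z^{1}(M,A)/\!\sim$.

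I expect the main obstacle to be topological rather than algebraic. The algebraic identities above are clean, but one must check that the $E$ constructed in the surjectivity step is a genuine \emph{proper} extension: that $\chi$ is a homeomorphic embedding and that $\pi$ is open. Openness is immediate from Lemma \ref{Lemma 1.6.}, while the embedding property follows by trivialising $P\cong N\times G$ and $A\rtimes_{\phi}P\cong A\times N\times G$ through the section $s$ and verifying that passing to the quotient by $K$ collapses the $A\times N$ factor homeomorphically onto $A$ (using Lemma \ref{Lemma 1.6.} once more for the continuous open map $(a,n)\mapsto a+\beta(n)$). The same principle—that an extension with a continuous section is trivial as a topological space—is what secures the continuity and openness of $\theta$ in the injectivity step, and it is there that the argument must be carried out with care.
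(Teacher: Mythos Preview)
Your proof is correct and follows essentially the same route as the paper's: both reduce via Theorem~\ref{Theorem 1.8.} to $Opext_{s}(G,A)$, pass between extensions and elements of $Z^{1}(M,A)$ using a projective lift $\lambda:P\to E$ together with the quotient of $A\rtimes P$ by the normal subgroup $K$ (your $K$ is exactly the paper's $N_{\alpha}$), and single out the homeomorphic-embedding check as the delicate topological point. The only differences are cosmetic---the paper builds the map in the direction $Z^{1}(M,A)/\!\sim\;\to Opext_{s}(G,A)$ first, verifies the embedding by exhibiting an explicit continuous inverse $\xi$ rather than by trivialising through the section $s$, and treats the Baer sum via a pushout--pullback diagram chase rather than a factor-set computation.
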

\begin{proof} By Theorem \ref{Theorem 1.8.}, it is enough to show that $Opext_{s}(G,A)$ is isomorphic to $Z^{1}(M,A)/\sim$.
\par Let $\alpha\in Z^{1}(M,A)$. Since $A$ is a $G$-module, then $A$ is a $P$-module via $\tau$. Thus, we may consider topological semidirect product $A\rtimes P$. Now, we define a relation  $\sim$ in the semidirect product $A\rtimes P$, as follows
\begin{center}
$(a,x)\sim (b,y)$ if and only if  $\tau(x)=\tau(y)$ and $a\alpha(x,y)=b$.
\end{center}
Obviously, $\sim$ is reflexive. Let  $x,y$ and $z$ be arbitrary elements of $P$ such that $\tau(x)=\tau(y)=\tau(z)$. Then
$$\alpha(x,y)\alpha(y,z)=\alpha(x,z)  \eqno(3.1)$$
since,
\begin{center}
$\alpha(x,z)=\alpha((x,y)(1,y^{-1}z))=\alpha(x,y).^{y}\alpha(1,y^{-1}z)=\alpha(x,y).^{y}\alpha(y^{-1}y,y^{-1}z)$
$$=\alpha(x,y).^{y}\alpha((y^{-1},y^{-1})(y,z))
=\alpha(x,y).^{y}(\alpha(y^{-1},y^{-1}).^{y^{-1}}\alpha(y,z))$$
$=\alpha(x,y).^{y}(1.^{y^{-1}}\alpha(y,z))=\alpha(x,y)\alpha(y,z).$
\end{center}
\par Now if $(a,x)\sim(b,y)$ and $(b,y)\sim(c,z)$, then
 \begin{center}
 $\tau(x)=\tau(y)=\tau(z)$, $a\alpha(x,y)=b$ and $b\alpha(y,z)=c$.
\end{center}
Hence, by $(3.1)$, $a\alpha(x,z)=a\alpha(x,y)\alpha(y,z)=b\alpha(y,z)=c.$ This implies that $\sim$ is transitive. By $(3.1)$, we have:
$$\alpha^{-1}(x,y)=\alpha(y,x). \eqno (3.2)$$
If $(a,x)\sim (b,y)$, then $\tau(x)=\tau(y)$ and $a\alpha(x,y)=b$. Thus, $a=b\alpha^{-1}(x,y)=b\alpha(y,x)$. Consequently, $\sim$ is an equivalence relation in $A\rtimes P$.
\par Now, Define $N_{\alpha}=\{(a,x)|(a,x)\in A\rtimes P, (a,x)\sim(1,1)\}$. Then, by the definition of $\sim$ and the identity $(3.2)$, we have
\\
$$N_{\alpha}=\{(a,x)|(a,x)\in A\rtimes P, \tau(x)=1, a\alpha(x,1)=1\}$$
$$=\{(a,x)|(a,x)\in A\rtimes P, \tau(x)=1, a=\alpha(1,x)\}.$$
Suppose that $(a,x), (b,x)\in N_{\alpha}$. Since $\tau(x)=1$, then $(a,x)(b,y)=(a^{x}b,xy)=(ab,xy)$. On the other hand, $\alpha(1,xy)=\alpha(1,x)\alpha(1,y)=ab$. Thus, $(a,x)(b,y)\in N_{\alpha}$. Also $(a,x)^{-1}=(^{x^{-1}}a^{-1},x^{-1})=(a^{-1},x^{-1})$ and $\alpha(1,x^{-1})=\alpha(xx^{-1},x^{-1})=\alpha(x,1)^{x}\alpha(x^{-1},x^{-1})=\alpha(x,1)=\alpha^{-1}(1,x)=a^{-1}$, hence $(a,x)^{-1}\in N_{\alpha}$. Therefore $N_{\alpha}$ is a subgroup of $A\rtimes P$.
\par Denote by $E_{\alpha}$ the quotient space $(A\rtimes P)/\sim$.
\\We show that $\sim$ is a congruence (i.e., $\sim$ is compatible with the group product) and therefore $E_{\alpha}$ is a group.
\par Let $(a,x)\sim(a',x')$ and $(b,y)=(b',y')$. Then $\tau(x)=\tau(x')$, $\tau(y)=\tau(y')$, $a\alpha(x,x')=a'$ and $ b\alpha(y,y')=b'$.
\par We have $(a,x)(b,y)=(a^{x}b,xy)$, $(a',x')(b',y')=(a'^{x'}b',x'y')$. Since $A$ is abelian, then
$$a^{x}b\alpha(xy,x'y')=a^{x}b\alpha(x,x')^{x}\alpha(y,y')=a\alpha(x,x')^{x}(b\alpha(y,y'))=a'^{x}b'=a'^{x'}b'.$$ Hence, $(a,x)(b,y)\sim(a',x')(b',y')$. Thus, $E_{\alpha}$ is a group. It is clear that $E_{\alpha}=(A\rtimes P)/N_{\alpha}$. This implies that $N_{\alpha}$ is a normal subgroup of $A\rtimes P$. Therefore $E_{\alpha}$ is a topological group.

\par There is a diagram as follows:
$$\xymatrix{M \ar [d]^{\alpha} \ar @{->} @< 3pt> [r]^{\iota_{0}}
\ar @{ ->} @<-3pt> [r]_{\iota_{1}} & P\ar [d]^{\beta}\ar [r]^{\tau}& G\ar @{=}[d] \ar [r]&1\\
A\ar [r]^{\sigma} & E_{\alpha} \ar[r]^{\psi} &G\ar[r] & 1} \eqno(**)$$
where $\sigma(a)=(a,1)N_{\alpha}$, $\psi(a,x)N_{\alpha}=\tau(x)$ and  $\beta(x)=(1,x)N_{\alpha}$.
\par It is obvious that $\sigma$ and $\beta$ are continuous homomorphisms. Consider the continuous map $\phi:A\rtimes P\rightarrow P, (a,x)\mapsto \tau(x)$.
\par We know that $\tau$ has a continuous section $s:G\rightarrow P$. Hence, $\beta\circ s$ is a continuous section for $\psi$, since
$\psi\beta\circ s=(\psi\beta)\circ s=\tau\circ s=Id_{G}$. Thus, by Lemma 1.6, $\phi$ is an open epimorphism.
\par If $\sigma(a)\in N_{\alpha}$, then $(a,1)\in N_{\alpha}$. Thus, by definition of $N_{\alpha}$, we get $a=\alpha(1,1)=1$. Therefore $\sigma$ is one to one.
\par $\psi\circ\sigma(a)=\psi((a,1)N_{\alpha})=\tau(1)=1$, thus $Im\sigma\subset Ker\psi$. On the other hand, if $\psi(a,x)N_{\alpha}=1$, then $\tau(x)=1$ and $\sigma(a\alpha(x,1))=(a,x)N_{\alpha}$, i.e., $Ker\psi\subset Im\sigma$. Therefore, $Im\sigma=Ker\psi$.
\par
Now, we prove that $\sigma$ is a homeomorphic embedding.
\\ Define the map $\chi:A\rtimes P\rightarrow A$ via $\chi(a,x)=a\alpha(x,s\tau(x)s(1)^{-1})$. Obviously, $\chi$ is continuous. It is clear that $(A\times \{1\})N_{\alpha}$ is a topological subgroup of $A\rtimes P$ and $(A\times \{1\})N_{\alpha}=\{(a,x)|a\in A, x\in Ker\tau\}$. Take $\hat{\chi}=\chi|_{(A\times \{1\})N_{\alpha}}$. Thus, $\hat{\chi}$ is continuous and $\hat{\chi}(a,x)=a\alpha(x,1)$. Note that $\hat{\chi}$ is a homomorphism, because
$$\hat{\chi}((a,x)(b,y))=\hat{\chi}(ab,xy)=ab\alpha(xy,1)=ab\alpha(x,1)\alpha(xy,x)$$
$$=ab\alpha(x,1)\alpha(x^{-1}xy,1)=ab\alpha(x,1)\alpha(y,1)=\hat{\chi}(a,x)\hat{\chi}(b,y).$$
If $(a,x)\in N_{\alpha}$, then $\hat{\chi}(a,x)=a\alpha(x,1)=\alpha(1,x)\alpha(x,1)=\alpha(1,1)=1$. Hence $\hat{\chi}(N_{\alpha})=1$. So, $\hat{\chi}$ induces the continuous homomorphism $$\xi:((A\times\{1\})N_{\alpha})/N_{\alpha}\rightarrow A, \xi((a,x)N_{\alpha})=\hat{\chi}(a,x).$$
It is clear that $Im\sigma=((A\times\{1\})N_{\alpha})/N_{\alpha}$. We have $\xi\circ\sigma(a)=a$ and
$$\sigma\circ\xi((a,x)N_{\alpha})=\sigma(a\alpha(x,1))=(a\alpha(x,1),1)N_{\alpha}=(a,x)(\alpha(x,1),x^{-1})N_{\alpha}$$
$$=(a,x)(\alpha(1,x^{-1}),x^{-1})N_{\alpha}=(a,x)N_{\alpha}.$$
Thus, $\sigma|^{Im\sigma}:A\rightarrow Im\sigma$, $\sigma|^{Im\sigma}(a)=(a,1)N_{\alpha}$, is a topological isomorphism. Therefore, $\sigma$ is a homeomorphic embedding.
\par
 Denote by $e_{\alpha}$ the extension $0\to A\stackrel{\sigma}\to E_{\alpha}\stackrel{\psi}\to G\to 1$. We have  $[e_{\alpha}]\in Opext_{s}(G,A)$. Since $s:G\rightarrow P$ is a continuous section for $\tau$, then $\beta\circ s$ is a continuous section for $\psi$ and
$$\beta\circ s(g)\sigma(a)\beta\circ s(g)^{-1}=(1,s(g))N_{\alpha}(a,1)N_{\alpha}(1,s(g)^{-1})N_{\alpha}$$ $$=(1,s(g))(a,1)(1,s(g)^{-1})N_{\alpha}=(^{s(g)}a,s(g))(1,s(g)^{-1})N_{\alpha}$$ $$=(^{s(g)}a,1)N_{\alpha}=(^{g}a,1)N_{\alpha}=\sigma(^{g}a).$$
In addition, $\sigma$ is one to one, thus the extension $e_{\alpha}$ induces the given action of $G$ on $A$. Hence $[e_{\alpha}]\in Opext_{s}(G,A)$.
\par Consider the map  $\zeta:Z^{1}(M,A)\rightarrow Opext_{s}(G,A)$ via $\alpha\mapsto [e_{\alpha}]$. Suppose that  $\alpha,\bar{\alpha}\in Z^{1}(M,A)$ and $\bar{\alpha}=\eta(\gamma)\alpha$, for some $\gamma\in Der_{c}(P,A)$. Define the map $\varepsilon:A\rtimes P\rightarrow A\rtimes P$ by $(a,x)\mapsto (a\gamma^{-1}(x),x)$. Obviously, $\varepsilon$ is continuous, and
$$\varepsilon((a,x)(b,y))=\varepsilon(a^{x}b,xy)=(a^{x}b\gamma^{-1}(xy),xy)=(a^{x}b^{x}\gamma^{-1}(y)\gamma^{-1}(x),xy)$$
$$=(a\gamma^{-1}(x)^{x}(b\gamma^{-1}(y)),xy)=(a\gamma^{-1}(x),x)(b\gamma^{-1}(y),y);$$
that is, $\varepsilon$ is a homomorphism.
\par If $(a,x)\in N_{\alpha}$, then
$$\bar{\alpha}(1,x)=(\gamma\iota_{0}.\gamma\iota_{1}^{-1}.\alpha)(1,x)=\gamma\iota_{0}(1,x)\gamma\iota_{1}^{-1}(1,x)\alpha(1,x)=\gamma^{-1}(x)\alpha(1,x)$$ $$=a\gamma^{-1}(x).$$
Hence, $(a\gamma^{-1}(x),x)\in N_{\bar{\alpha}}$, i.e., $\varepsilon(N_{\alpha})\subset N_{\bar{\alpha}}$. Thus, $\varepsilon$ induces the continuous homomorphism $\hat{\varepsilon}:E_{\alpha}\rightarrow E_{\bar{\alpha}}, (a,x)N_{\alpha}\mapsto (a\gamma^{-1}(x),x)N_{\bar{\alpha}}$.
\par Suppose that the extensions $e_{\alpha}:0\to A\stackrel{\sigma}\to E_{\alpha}\stackrel{\psi}\to G\to 1$ and $e_{\bar{\alpha}}:0\to A\stackrel{\bar\sigma}\to E_{\bar\alpha}\stackrel{\bar\psi}\to G\to 1$ are corresponding to $\alpha$ and $\bar{\alpha}$, respectively. We show that the following diagram commutes.
$$\xymatrix{e_{\alpha}:0\ar[r] &A\ar[r]^{\sigma}\ar @{=}[d]& E_{\alpha}\ar[r]^{\psi}\ar[d]^{\hat{\varepsilon}}&G\ar[r]\ar @{=}[d] & 1
\\ e_{\bar{\alpha}}:0\ar[r] &A\ar[r]^{\bar{\sigma}}& E_{\bar{\alpha}}\ar[r]^{\bar{\psi}}&G\ar[r] & 1}$$
Because, $\bar{\psi}\hat{\varepsilon}(a,x)N_{\alpha}=\bar{\psi}(a\gamma^{-1}(x),x)N_{\bar{\alpha}}=\tau(x)=\psi(a,x)N_{\alpha},$ and $\hat{\varepsilon}{\sigma}(a)=\hat{\varepsilon}(a,1)N_{\alpha}=(a\gamma^{-1}(1),1)N_{\bar{\alpha}}=(a,1)N_{\bar{\alpha}}=\bar{\sigma}(a).$ i.e., $[e_{\alpha}]=[ e_{\bar{\alpha}}]$. Thus, $\zeta$ induces the map $$\Theta:Coker\eta\rightarrow Opext_{s}(G,A)$$ $$[\alpha]\mapsto [e_{\alpha}].$$
We will prove that $\Theta$ is a bijective map.
\par
Let $e:\xymatrix{0\ar[r]& A\ar@{^(->}[r]^{\sigma} & C\ar[r]^{\psi}& G\ar[r]& 1}$ be an extension with a continuous section $s:G\rightarrow A$ and corresponding to the given way in which $G$ acts on $A$. Since $P$ is a projective topological group, then there is a continuous homomorphism $\beta$ so that
$$\xymatrix{P\ar [d]^{\beta}\ar [r]^{\tau}& G\ar @{=}[d] \ar [r]&1\\
 C \ar[r]^{\psi} &G\ar[r] & 1}$$
commutes. Now, define $\alpha_{\beta}:M\rightarrow A$ by $\alpha_{\beta}(m)=(\beta\iota_{0}\beta\iota_{1}^{-1})(m)$. Note that $\alpha_{\beta}$ is well-defined, because
$\psi(\beta\iota_{0}\beta\iota_{1}^{-1})=\psi\beta\iota_{0}\psi\beta\iota_{1}^{-1}=\tau\iota_{0}\tau\iota_{1}^{-1}=\mathbf{1}$. Thus, $Im\alpha_{\beta}\subset Ker\psi=Im\sigma=A$. For simplicity, we denote $\alpha_{\beta}$ by $\beta{\iota_{0}}\beta\iota^{-1}_{1}$. It is clear that $\alpha_{\beta}$ is continuous and in addition, $\alpha_{\beta}$ is  a crossed homomorphism, since
$$\alpha_{\beta}((x_{1},y_{1})(x_{2},y_{2}))=\alpha_{\beta}(x_{1}x_{2},y_{1}y_{2})=\beta(x_{1}x_{2})\beta^{-1}(y_{1}y_{2})$$
$$=\beta(x_{1})\beta(x_{2})\beta^{-1}(y_{2})\beta^{-1}(y_{1})=\beta(x_{1})\alpha_{\beta}(x_{2},y_{2})\beta^{-1}(y_{1})$$
$$=\alpha_{\beta}(x_{1},y_{1})\beta(y_{1})\alpha_{\beta}(x_{2},y_{2})\beta^{-1}(y_{1})=\alpha_{\beta}(x_{1},x_{2}).^{\beta(y_{1})}\alpha_{\beta}(x_{2},y_{2})$$
$$=\alpha_{\beta}(x_{1},y_{1}).^{\psi\beta(y_{1})}\alpha_{\beta}(x_{2},y_{2})=\alpha_{\beta}(x_{1},y_{1}).^{\tau(y_{1})}\alpha_{\beta}(x_{2},y_{2})$$
$$=\alpha_{\beta}(x_{1},y_{1}).^{(x_{1},y_{1})}\alpha_{\beta}(x_{2},y_{2}),$$
and $\alpha_{\beta}(x,x)=\beta(x)\beta^{-1}(x)=1$. i.e., $\alpha_{\beta}(\Delta_{P})=1$, hence $\alpha_{\beta}\in Z^{1}(M,A)$.
Consequently, $\beta$ and $\alpha_{\beta}$ make the following diagram commutative.
$$\xymatrix{& M \ar [d]^{\alpha_{\beta}} \ar @{->} @< 3pt> [r]^{\iota_{0}}
\ar @{ ->} @<-3pt> [r]_{\iota_{1}} & P\ar [d]^{\beta}\ar [r]^{\tau}& G\ar @{=}[d] \ar [r]&1\\
0\ar[r]&A\ar [r]^{\sigma} & C \ar[r]^{\psi} &G\ar[r] & 1}$$
\par Now, we will show that $[\alpha_{\beta}]$ is independent of the choice of $\beta:P\rightarrow C$.
\\ Consider the commutative diagram.
$$\xymatrix{& M \ar [d]^{\alpha_{\beta}}_{\alpha_{\bar{\beta}}} \ar @{->} @< 3pt> [r]^{\iota_{0}}
\ar @{ ->} @<-3pt> [r]_{\iota_{1}} & P\ar [d]^{\beta}_{\bar{\beta}}\ar [r]^{\tau}& G\ar @{=}[d] \ar [r]&1\\
0\ar[r]&A\ar [r]^{\sigma} & C \ar[r]^{\psi} &G\ar[r] & 1}$$
Define $\gamma(x)=(\bar{\beta}\beta^{-1})(x)$, for all $x\in P$. Since $\psi\gamma(x)=\psi\bar{\beta}(x)\psi\beta^{-1}(x)=\tau(x)\tau(x)^{-1}=1$, then $\gamma(x)\in Ker\psi=A$. Hence, we can define the continuous map $\gamma:P\rightarrow A$, $x\mapsto \gamma(x)$. Now $\gamma$ is a crossed homomorphism, since
$$\gamma(xy)=(\bar{\beta}\beta^{-1})(xy)=\bar{\beta}(xy)\beta^{-1}(xy)=\bar{\beta}(x)\bar{\beta}(y)\beta^{-1}(y)\beta^{-1}(x)$$
$$=\bar{\beta}(x)\gamma(y)\beta^{-1}(x)=\gamma(x)\beta(x)\gamma(y)\beta^{-1}(x)=\gamma(x)^{\beta(x)}\gamma(y)$$
$$=\gamma(x)^{\psi\beta(x)}\gamma(y)=\gamma(x)^{x}\gamma(y).$$
We have $\bar{\beta}=\gamma\beta$. Thus,
\begin{center}
$\alpha_{\bar{\beta}}=\bar{\beta}\iota_{0}\bar{\beta}\iota^{-1}_{1}=\gamma\beta\iota_{0}\gamma\beta\iota^{-1}_{0}=\gamma\iota_{0}\beta\iota_{0}\beta\iota^{-1}_{1}\gamma\iota^{-1}_{1}=\gamma\iota_{0}(\beta\iota_{0}\beta\iota^{-1}_{1})\gamma\iota^{-1}_{1}=\gamma\iota_{0}\alpha_{\beta}\gamma\iota^{-1}_{1}=\alpha_{\beta}(\gamma\iota_{0}\gamma_{1}^{-1}).$
\end{center}
i.e., $\alpha_{\bar{\beta}}\sim\alpha_{\beta}$.
\par Now, we will show that $\Theta([\alpha_{\beta}])=[e]$. It is sufficient to prove that  $e_{\alpha_{\beta}} \sim e$.
Define the map $\nu:A\rtimes P\rightarrow C$ via $(a,x)\mapsto a\beta(x)$. Obviously, $\nu$ is continuous, and since
\begin{center}
$\nu((a,x)(b,y))=\nu(a^{x}b,xy)=a^{x}b\beta(xy)=a\beta(x)^{\beta(x)^{-1}}(^{x}b)\beta(y)=\nu(a,x)^{\psi\beta(x)^{-1}}(^{x}b)\beta(y)=\nu(a,x)^{x^{-1}}(^{x}b)\beta(y)=\nu(a,x)\nu(b,y),$
\end{center}
then $\nu$ is a homomorphism. On the other hand, if $(a,x)\in N_{\alpha_{\beta}}$, then
$$\nu(a,x)=a\beta(x)=\alpha_{\beta}(1,x)\beta(x)=\beta^{-1}(x)\beta(x)=1.$$
Thus, $\nu:A\rtimes P\rightarrow C$ induces the continuous homomorphism $\hat{\nu}:E_{\alpha_{\beta}}\rightarrow C$, $(a,x)N_{\alpha_{\beta}}\mapsto a\beta(x)$. Note that $\hat{\nu}$ commutes the following diagram
$$\xymatrix{e_{\alpha_{\beta}}:0\ar[r] & A\ar[r]^{\bar{\sigma}} &E_{\alpha_{\beta}}\ar[r]^{\bar{\psi}} \ar[d]^{\hat{\nu}} & G\ar[r] & 1
\\e:0\ar[r] & A\ar@{^(->}[r]^{\sigma} \ar@{=}[u] & C\ar[r]^{\psi} & G\ar[r] \ar@{=}[u] & 1.}$$
Since $\hat{\nu}\bar{\sigma}(a)=\hat{\nu}((a,1)N_{\alpha_{\beta}})=a\beta(1)=a=\sigma(a)$, and $\psi\hat{\nu}((a,x)N_{\alpha_{\beta}})=\psi(a\beta(x))=\psi\beta(x)=\tau(x)=\bar{\psi}((a,x)N_{\alpha_{\beta}})$. Therefore, $e_{\alpha_{\beta}}\sim e$. i.e., $\Theta$ is onto.
\par We show that $\Theta$ is one to one.
\\Let $e_{i}:\xymatrix{0\ar[r]& A\ar@{^(->}[r]^{\sigma_{i}} & C_{i}\ar[r]^{\psi_{i}}& G\ar[r]& 1}$, $i=0, 1$, be the extensions with continuous sections and corresponding to the given way in which $G$ acts on $A$. Suppose that $e_{1}\sim e_{2}$ are equivalent, i.e.,  there is continuous homomorphism $\omega:C_{1}\rightarrow C_{2}$ such that  $\omega(a)=a, \forall a\in A$, and $\psi_{2}\omega=\psi_{1}$.   There is a continuous homomorphism $\beta:P\rightarrow C_{1}$ such that $\psi_{1}\beta=\tau$. Take $\bar{\beta}=\omega\circ\beta$. Consider the following diagram.
$$\xymatrix@!0{ & &M \ar@{->}[dddl]_{\alpha_{\bar{\beta}}}\ar@{->}[dd]^{\alpha_{\beta}} \ar @{->} @< 3pt> [rr]^{\iota_{0}}
\ar @{ ->} @<-3pt> [rr]_{\iota_{1}}
& & P \ar@{->}[dddl]_{\bar{\beta}}\ar@{->}[rr]^{\tau}\ar@{->}[dd]^{\beta} & & G\ar@{=}[dddl]\ar@{=}[dd]\ar[r] & 1
\\
& & & & & & &
\\
&0\ar@{-->}[r] &A \ar@{=}[dl] \ar@{^(-}[r]^{\ \ \sigma_{1}}
& \ar@{-->}[r]& C_{1}\ar@{->}[dl]^{\omega}\ar@{-}[r]^{\ \ \psi_{1}} & \ar@{-->}[r]& G\ar@{=}[dl]\ar[r] &1
\\
0\ar[r]& A \ar@{^(->}[rr]_{\sigma_{2}}
& & C_{2} \ar@{->}[rr]_{\psi_{2}} & &G\ar[r]  &1}$$
Since the  back, the bottom, the front squares, and  the middle triangle are commutative, so is the left triangle, i.e., $\alpha_{\bar{\beta}}=\alpha_{\beta}$. This shows that $\Theta$ is one to one. Consequently, $\Theta$ is bijective.
\par Finally, we prove that $\Theta$ is a homomorphism.
\\ Suppose that $\Theta([\alpha_{i}])=[e_{i}]$, $i=0, 1$. In another words, the following diagram is commutative.
$$\xymatrix{&M\ar @{->} @< 3pt>[r]^{\iota_{0}}\ar @{->} @< -3pt>[r]_{\iota_{1}} \ar[d]^{\alpha_{i}} & P\ar[r]^{\tau} \ar[d]^{\beta_{i}}& G\ar[r] \ar@{=}[d]&1
\\e_{i}:0\ar[r]&A\ar[r]^{\sigma_{i}}&E_{i}\ar[r]^{\psi_{i}}& G\ar[r]&1}$$
for $i=0, 1$.
Hence, we have the commutative diagram
$$\xymatrix{&M\ar @{->} @< 3pt>[r]^{\iota_{0}}\ar @{->} @< -3pt>[r]_{\iota_{1}} \ar[d]_{\alpha_{1}\times \alpha_{2}} & P\ar[r]^{\tau} \ar[d]_{\beta_{1}\times \beta_{2}}& G\ar[r] \ar[d]_{\mathbf{\Delta}_{G}}&1
\\e_{1}\times e_{2}:0\ar[r]&A\times A\ar[r]^{\sigma_{1}\times \sigma_{2}}&E_{1}\times E_{2}\ar[r]^{\psi_{1}\times \psi_{2}}& G\times G\ar[r]&1}$$
where $\mathbf{\Delta}_{G}(g)=(g,g)$, $\beta_{1}\times\beta_{2}(p)=(\beta_{1}(p),\beta_{2}(p))$, $\alpha_{1}\times\alpha_{2}(m)=(\alpha_{1}(m),\alpha_{2}(m))$, $\sigma_{1}\times\sigma_{2}(a,b)=(\sigma_{1}(a),\sigma_{2}(b))$  and $\psi_{1}\times\psi_{2}(e_{1},e_{2})=(\psi_{1}(e_{1}),\psi_{2}(e_{2}))$.
\\ Then by definition of pushout and pullback extensions [Al], we get the following diagram
$$\xymatrix{&
& M \ar@{->}@< 3pt>[rr]^{\iota_{0}}\ar@{->}@< -3pt>[rr]_{\iota_{1}}\ar@{->}'[d]^{\alpha_{1}\times\alpha_{2}}[dd]
& & P \ar@{->>}[rr]^{\tau}\ar@{->}'[d]^{\beta_{1}\times\beta_{2}}[dd] & & G\ar@{=}[dl]\ar@{->}^{\mathbf{\Delta}_{G}}[dd]
\\
e_{4}:& A \ar@{<-}[ur]^{\alpha_{1}.\alpha_{2}}\ \ar@{>->}[rr]\ar@{=}[dd]
& & E_{4} \ar@{<..}[ur]^{\gamma}\ar@{->}[dd] \ar@{->>}[rr]& & G\ar@{-}[d]^{\mathbf{\Delta}_{G}}
\\
& & A\times A \ \ar@{>->}'[r]^{\ \ \ \sigma_{1}\times\sigma_{2}}[rr]
& & E_{1}\times E_{2}\ar@{->>}'[r]^{\ \ \ \psi_{1}\times\psi_{2}}[rr] & & G\times G\ar@{=}[dl]
\\
e_{3}:& A \ \ar@{>->}[rr]\ar@{<-}[ur]^{\nabla_{A}}
& & E_{3} \ar@{<-}[ur]\ar@{->>}[rr] & &G\times G\ar@{<-}[uu]
}( ***)$$
in which, extension $e_{3}=\nabla_{A}(e_{1}\times e_{2}):\xymatrix{ 0\ar[r]&A\ar[r] & E_{3}\ar[r]& G\times G\ar[r]&1}$ is the pushout extension of $e_{1}\times e_{2}$, and extension $e_{4}=(\nabla_{A}(e_{1}\times e_{2}))\mathbf{\Delta}_{G}:\xymatrix{ 0\ar[r]&A\ar[r] & E_{4}\ar[r]& G\ar[r]&1}$ is the pullback extension of $\nabla_{A}(e_{1}\times e_{2})$. Also, $\nabla_{A}$ is  the codiagonal map, i.e.,  $\nabla_{A}(a,b)=ab$, and $\alpha_{1}.\alpha_{2}(m)=\alpha_{1}(m)\alpha_{2}(m)$.
Obviously,  the left and the right squares are commutative. Since $P$ is a projective topological group, then the  existence of $\gamma:P\rightarrow A$ is guaranteed and commutes the right-up square. Therefore in the right cube of diagram $(***)$, all the  up, down, front, back and right squares are commutative. So $\gamma$ commutes the middle square. On the other hand, in the left cube of diagram $(***)$, all the right, left, back, front and down squares are commutative. Thus, $\gamma$ commutes the left-up square of diagram $(***)$. Therefore, the whole top of diagram $(***)$ is commutative. This means that $\Theta([\alpha_{1}.\alpha_{2}])=[e_{4}]$. On the other hand by definition of Baer sum in the $Opext_{s}(G,A)$, $[\nabla_{A}(e_{1}\times_{2})\mathbf{\Delta}_{G}]=[e_{1}]+[e_{2}]$. Consequently, $\Theta(\alpha_{1}.\alpha_{2})=[e_{1}]+[e_{2}]=\Theta(\alpha_{1})+\Theta(\alpha_{2})$, i.e., $\Theta$ is an isomorphism and thereby $\Theta$ is an isomorphism. The proof is completed. 
\end{proof}
Note that Theorem \ref{Theorem 2.1.}  is similar to \cite[Proposition 6]{Ina-n} in a topological context.
\begin{cor} Let $P$ be a projective topological group, then $H^{2}(P,A)=0$, for every abelian topological $P$-module $A$. In paricular, for every Markov (Graev) free topological groups $F$, $H^{2}(F,A)=0$, for every abelian topological $F$-module $A$.
\end{cor}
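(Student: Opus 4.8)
The plan is to reduce the vanishing of $H^{2}(P,A)$ to the splitting of every proper extension, and then to extract that splitting directly from projectivity. First I would invoke Theorem \ref{Theorem 1.8.} (equivalently, combining Theorem \ref{Theorem 2.1.} with it) to trade the cohomology group for the geometric object: since $H^{2}(P,A)\cong Opext_{s}(P,A)$ with Baer sum, it suffices to prove that $Opext_{s}(P,A)$ is trivial, i.e. that every proper extension of $P$ by $A$ admitting a continuous section is equivalent to the semidirect product extension $0\to A\to A\rtimes P\to P\to 1$, which represents the neutral Baer class.

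So let $e:1\to A\stackrel{\chi}\to E\stackrel{\pi}\to P\to 1$ be an arbitrary element of $opext_{s}(P,A)$ with continuous section. By Definition \ref{Definition 1.7.}, $\pi$ is an open continuous epimorphism possessing a continuous section; in particular it is a continuous epimorphism with a continuous section, so it lies in the class $\mathfrak{F}$. Now I would apply the projectivity of $P$ (Definition \ref{Definition 1.1.}) to the pair consisting of $\pi:E\to P$ and the identity homomorphism $Id_{P}:P\to P$. This produces a continuous homomorphism $g:P\to E$ with $\pi g=Id_{P}$; that is, $e$ is split by a continuous homomorphic section.

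It then remains to promote this homomorphic splitting to a genuine equivalence of extensions. I would define $\theta:A\rtimes P\to E$ by $\theta(a,p)=\chi(a)g(p)$ and check that it is a continuous homomorphism lying over $Id_{A}$ and $Id_{P}$. The homomorphism property rests on the fact that the action of $P$ on $A$ carried by $e$ is conjugation and is independent of the chosen section (as recorded after Definition \ref{Definition 1.7.}); since $g$ is itself a continuous section, $g(p)\chi(a)g(p)^{-1}=\chi(^{p}a)$, and $^{p}a$ is exactly the action $\phi$ used to form $A\rtimes P$, whence $\theta((a,p)(b,q))=\theta(a,p)\theta(b,q)$. Bijectivity of $\theta$ is the short five lemma, and its set-theoretic inverse $x\mapsto(\chi^{-1}(x\,g(\pi(x))^{-1}),\pi(x))$ is continuous because $\chi$ is a homeomorphic embedding; hence $\theta$ is a topological isomorphism, giving $e\equiv(A\rtimes P)$ and $[e]=0$.

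I expect the only real obstacle to be this final, topological, step rather than the algebra: upgrading the continuous splitting $g$ to an \emph{open} (hence iso) comparison map, as the equivalence relation on $Opext_{s}$ demands. The argument hinges on the ``proper'' hypothesis, namely that $\chi$ is a homeomorphic embedding, which is precisely what makes $\theta^{-1}$ continuous. Finally, the ``in particular'' clause is immediate: by Lemma \ref{Lemma 1.2.} every Markov (Graev) free topological group is projective, so $H^{2}(F,A)=0$ for every abelian topological $F$-module $A$.
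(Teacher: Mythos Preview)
Your argument is correct, but it follows a genuinely different route from the paper's. The paper exploits Theorem \ref{Theorem 2.1.} directly: since $P$ is already projective, one may take $\tau=Id_{P}:P\to P$ itself as the projective cover, so the simplicial kernel is $M=\{(x,y)\in P\times P:\tau(x)=\tau(y)\}=\Delta_{P}$; but by definition every $\alpha\in Z^{1}(M,A)$ vanishes on $\Delta_{P}$, hence $Z^{1}(M,A)=0$ and $H^{2}(P,A)\cong Z^{1}(M,A)/\sim=0$. Your approach bypasses Theorem \ref{Theorem 2.1.} altogether, appealing only to Theorem \ref{Theorem 1.8.} and the definition of projectivity to split every extension explicitly. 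The paper's proof is shorter and showcases the main theorem; yours is more elementary and self-contained, and in fact shows that the corollary does not depend on the simplicial-kernel machinery at all. The parenthetical ``equivalently, combining Theorem \ref{Theorem 2.1.} with it'' is therefore misleading and should be dropped, since you never actually use Theorem \ref{Theorem 2.1.}.
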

\begin{proof} It is clear that $(\Delta_{P},\iota_{0},\iota_{G},Id_{P})$  is a standard projective simplicial kernel of $P$. Thus,  every continuous crossed homomorphism $\alpha:\Delta_{P}\to A$ is the zero map. Consequently, $H^{2}(P,A)=0$.
\end{proof}
Thus, we have discovered a general result of \cite[(5.6)]{Hu}.

\end{document}